\newtheorem{theorem}{Theorem}[section]
\newtheorem{definition}{Definition}[section]
\newtheorem{lemma}[theorem]{Lemma}
\theoremstyle{remark}
\newtheorem*{remark}{Remark}
\title{Spectral Flow Equivariance for Calabi-Yau Sigma Models}
\author{Emile Bouaziz}
\address{Academia Sinica, Taipei}
\email{emile.g.bouaziz@gmail.com}
\begin{document}\maketitle

\begin{abstract} We write down an explicit operator on the chiral de Rham complex of a Calabi-Yau variety $X$ which intertwines the usual $\mathcal{N}=2$ module structure with its twist by the spectral flow automorphism of the $\mathcal{N}=2$, producing the expected \emph{spectral flow equivariance}. 
Taking the trace of the operators $L_{0}$ and $J_{0}$ on cohomology, and using the obvious interaction of spectral flow with characters, we obtain an explicit categorification of ellipticity of the elliptic genus of $X$, which is well known by other means. \end{abstract}

\section{introduction}\subsection{Sigma Models}

We are interested in mathematically constructing some expected additional symmetry in sigma models. Let $(X,\Omega^{+})$ be a $d$-dimensional Calabi-Yau variety, ie $X$ is a smooth $\mathbf{C}$ variety with a choice of volume form $\Omega^{+}$. Associated to $(X,\Omega^{+})$ is a sheaf $\Omega^{ch}_{X}$ of super vertex algebras on $X$, which is formally locally isomorphic to a tensor product of $d$-copies of the physicists' $bc\beta\gamma$-system. $\Omega^{ch}_{X}$ is referred to as the \emph{chiral de Rham complex} and was originally constructed in the seminal paper \cite{MSV} - it has since been extensively studied -  see for example \cite{KapVass}, \cite{Kap}, \cite{BHS} and in particular the recent work \cite{LinSong} in which global sections is calculated.

In \emph{loc. cit} the authors also construct an explicit action of the $\mathcal{N}=2$ superconformal algebra at central charge $c=3d$ on $\Omega^{ch}_{X}$ by showing that certain locally defined vectors are invariant under coordinate transformations preserving the volume form. In \cite{Kap} the relationship of $\Omega^{ch}_{X}$ is studied and it is explained that the cohomology $\mathcal{H}_{X}:=H^{*}(X,\Omega^{ch}_{X})$ is a rigorous mathematical incarnation of the state space of the physicists' $\sigma$-model with target $X$. 

Representations of the $\mathcal{N}=2$ are controlled by its Lie algebra of modes, $\mathfrak{g}_{\mathcal{N}=2}$. This Lie algebra has an automorphism referred to as the \emph{spectral flow} and denoted $\sigma$ and satisfying $$L(z)\mapsto L(z)+J(z)+\frac{d}{2}z^{-1},$$ $$J(z)\mapsto J(z)+dz^{-1},$$$$Q(z)\mapsto zQ(z),$$ $$G(z)\mapsto G(z)/z.$$ 

\subsection{What we do} In the physical literature it seems to be understood that $\mathcal{H}_{X}$ must admit an equivariant structure for the spectral flow, which we take to mean an isomorphism with its twist by $\sigma$. See for example \cite{Eg1}, \cite{Eg2} and in particular Odake's work \cite{Oda} from which it is quite clear that the results of this note are unsurprising to a physicist working with $\sigma$-models. 

Nonetheless, it is not immediately obvious how to produce the requisite isomorphism from the literature, and our conversations with some experts in the chiral de Rham theory have served to reassure that this is not \emph{entirely} a result of the author's ignorance. Our goal in this note is to remedy this situation, and make mathematically precise the extended $\mathcal{N}=2$ supersymmetry of $\Omega^{ch}_{X}$. More prosaically, we endow $\Omega^{ch}_{X}$ with an equivariant structure for $\sigma$ - writing $\sigma^{*}\Omega^{ch}_{X}$ for the same sheaf with $\mathcal{N}=2$ module structure twisted by $\sigma$ we shall show the following; 
\bigskip

\begin{tcolorbox}\begin{theorem}There is a natural isomorphism of sheaves of $\mathcal{N}=2$ modules on $X$,$$\sigma_{X}:\Omega^{ch}_{X}\rightarrow\sigma^{*}\Omega^{ch}_{X}.$$  Taking cohomology there is a natural isomorphism of $\mathfrak{g}_{\mathcal{N}=2}$ modules $$\mathcal{H}_{X}\rightarrow\sigma^{*}\mathcal{H}_{X}.$$ \end{theorem}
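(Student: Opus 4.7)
The plan is to construct $\sigma_X$ as an explicit sheaf endomorphism of $\Omega^{ch}_X$ built naturally from the volume form $\Omega^+$, verify both the intertwining relations and coordinate invariance, and then obtain the cohomological statement by functoriality. On a chart $U \subset X$ with coordinates $x^1, \ldots, x^d$ in which $\Omega^+ = f(x)\, dx^1 \wedge \cdots \wedge dx^d$, the sheaf $\Omega^{ch}_X|_U$ is presented as a tensor product of $bc\beta\gamma$-systems with the MSV/Kapustin expressions for the $\mathcal{N}=2$ currents $L, J, Q, G$, and $\Omega^+$ determines a distinguished state $|\Omega^+\rangle$ of conformal weight zero and definite $J_0$-charge $\pm d$, obtained by acting on $f$ with appropriate zero modes of the fermionic fields.

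My candidate for $\sigma_X$ is an operator of $J_0$-charge $-d$ and conformal weight zero, assembled from a specific mode of the vertex operator attached to $|\Omega^+\rangle$, together with a $J_0$-dependent correction if needed to match the shift $L_0 \mapsto L_0 + J_0 + d/2$. This is in line with the familiar picture (from the bosonisation of a free $bc$ system) of spectral flow as implemented by a charged vertex operator. Requiring the two abelian shifts $J_0 \mapsto J_0 + d$ and $L_0 \mapsto L_0 + J_0 + d/2$ to hold on commutation fixes the normalisation up to a scalar; the content of the theorem is that the operator so specified actually satisfies the remaining, non-abelian relations.

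Verifying the intertwining on the full generating set $L, J, Q, G$ is the technical heart of the proof and the main obstacle. The relations for $L$ and $J$ reduce to weight-and-charge bookkeeping once $|\Omega^+\rangle$ is understood. The delicate relations $\sigma_X Q(z) = z Q(z) \sigma_X$ and $\sigma_X G(z) = z^{-1} G(z) \sigma_X$ require that the OPE of the field attached to $|\Omega^+\rangle$ with $Q(z)$ and $G(z)$ produces precisely the integer mode shifts $Q_n \leftrightarrow Q_{n-1}$ and $G_n \leftrightarrow G_{n+1}$; this should emerge from a direct OPE calculation using the explicit local formulas, and is the step whose success genuinely uses the Calabi-Yau condition. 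Once intertwining is established, globalisation is essentially free: $|\Omega^+\rangle$ is coordinate-invariant by definition of $\Omega^+$, and the construction of $\sigma_X$ from $|\Omega^+\rangle$ is natural, so $\sigma_X$ is invariant under the same class of volume-preserving coordinate changes that preserves the MSV/Kapustin $\mathcal{N}=2$ structure itself. Finally, the statement on cohomology follows by applying $H^*(X, -)$ to the sheaf-level isomorphism, since the $\mathfrak{g}_{\mathcal{N}=2}$ action on $\mathcal{H}_X$ is the one induced from $\Omega^{ch}_X$ and therefore intertwines automatically with the induced $\sigma_X$.
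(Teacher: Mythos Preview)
Your approach is broadly aligned with the paper's: both build $\sigma_X$ from a vertex operator associated to the volume form $\Omega^+$. The paper's explicit formula is
\[
\sigma_X(z) = (-1)^{F_0}\, E^{+}_{-J}(z)\,\Omega^{+}(z)\,E^{-}_{-J}(z)\,z^{-J_0},
\]
which is then shown to be independent of $z$ via the identity $:J(z)\Omega^{+}(z): = \frac{d}{dz}\Omega^{+}(z)$. Your description (``a specific mode of the vertex operator attached to $|\Omega^{+}\rangle$, together with a $J_0$-dependent correction if needed'') is compatible with this but not precise enough to run the computations; note in particular that the dressing involves the full exponentials $E^{\pm}_{-J}$ in all nonzero modes of $J$, not merely a $J_0$ correction.

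There are two genuine gaps. First, you do not address invertibility. The paper constructs the inverse $\tau_X$ explicitly from the dual polyvector field $\Omega^{-}$ (locally $b_1\cdots b_d$) by the parallel formula
\[
\tau_X(w) = (-1)^{F_0}\, E^{+}_{J}(w)\,\Omega^{-}(w)\,E^{-}_{J}(w)\,w^{J_0},
\]
and then checks $\sigma_X\tau_X = \mathrm{id}$ using the leading OPE $\Omega^{+}(z)\Omega^{-}(w)\sim (z-w)^{-d}+\cdots$. Without this you have only a map, not an isomorphism.

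Second, the $Q$ and $G$ intertwining relations are not symmetric in the way your sketch suggests. The $Q$ relation goes through because $Q(z)\Omega^{+}(w)\sim 0$, so $Q(w)$ passes freely through $\Omega^{+}(z)$ and only picks up factors from $E^{\pm}_{-J}$, $z^{-J_0}$ and $(-1)^{F_0}$. But $G(z)\Omega^{+}(w)$ is \emph{not} regular: locally $G$ contains the $b_i$, which contract nontrivially with the $c^j$ in $\Omega^{+}$. The paper sidesteps this by proving the equivalent relation $\tau_X G(w) = wG(w)\tau_X$, exploiting instead that $G(z)\Omega^{-}(w)\sim 0$. So the inverse built from $\Omega^{-}$ is not merely a convenience for bijectivity; it is precisely what makes the $G$ intertwining tractable. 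Your plan to verify both $Q$ and $G$ relations by a direct OPE calculation against $\Omega^{+}$ alone would run into this asymmetry.
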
\end{tcolorbox}

\subsection{Why we do this.} 

Some motivation for the results we present comes from forthcoming work of the author, where a version of $\Omega^{ch}_{X}$ is defined and studied with $X$ replaced with a log pair $(X;\Delta)$. We find that the amount of symmetry of $\Omega^{ch}_{X;\Delta}$ depends on how strong a CY condition we impose on the pair $(X;\Delta)$. We find that \begin{itemize} \item if $(X;\Delta)$ is \emph{weakly  CY}, ie if $X\setminus\Delta$ admits a volume form, then $\Omega^{ch}_{X;\Delta}$ admits $\mathcal{N}=2$ supersymmetry, \item and if further $(X;\Delta)$ is CY, ie $\Delta\sim -K_{X}$ is anticanonical, then $\Omega^{ch}_{X;\Delta}$ is also equivariant for the spectral flow. \end{itemize}

We sketch some further motivation coming from the elliptic genus. In fact the result of this note can be seen as a categorification of a functional equation satisfied by the elliptic genus. More precisely, it is proven in \cite{BorLib} that for $X$ proper, the $\mathcal{N}=2$ character of $\mathcal{H}_{X}$, defined as $$\operatorname{Trace}\Big\{q^{L_{0}-c/24}y^{J_{0}}(-1)^{F_{0}}:\mathcal{H}_{X}\longrightarrow\mathcal{H}_{X}\Big\},$$  is a \emph{Jacobi form} of index $d/2$ and weight $0$. In  particular it satisfies the elliptic equation $$q^{dn^{2}/2}y^{dn}\operatorname{ch}_{\mathcal{N}=2}(\mathcal{H}_{X})(q,q^{n}y)=\operatorname{ch}_{\mathcal{N}=2}(\mathcal{H}_{X})(q,y).$$  This is proven via an explicit calculation with $\theta$-functions, using a degeneration of $\Omega^{ch}_{X}$ to a product of various tensor fields. An explicit examination of the form of the spectral flow makes clear that this follows immediately from our main theorem, indeed we have  $$\operatorname{ch}_{\mathcal{N}=2}(q,y)((\sigma^{n})^{*}V)=q^{dn^{2}/2}y^{dn}\operatorname{ch}_{\mathcal{N}=2}(V)(q,y)$$ for any $\mathcal{N}=2$ module $V$ at central charge $c=3d$ with well defined character. As such we can think of our main result as an explicit lifting to the level of vector spaces (in fact sheaves) of ellipticity of the character.

\subsection{What makes this work} Not all $\mathcal{N}=2$ modules admit spectral flow equivariance, indeed the elliptic transformations of their characters form an obvious obstruction. So something special about $\mathcal{H}_{X}$ makes our arguments work. We use this subsection to explain this, notation will be explained in the following sections, and the reader unfamiliar with the subject matter should read them first.

A formula for equivariance is given below in \ref{maindef}, where relevant definitions are spelled out. Writing $E^{\pm}_{v}(z)$ for the strictly positive and strictly negative parts of the exponential of a field $v(z)$, and $\Omega^{+}(z)$ for the field corresponding to the volume form $\Omega^{+}$, it is given by $$\frac{1}{2\pi i}\oint(-1)^{F_{0}}E^{+}_{-J}(z)\Omega^{+}(z)E^{-}_{-J}(z)z^{-1-J_{0}}dz.$$ \begin{remark}In fact the integrand is a constant multiple of $\frac{dz}{z}$, and we have included the contour integral above is just so that the result is manifestly an endomorphism of $\mathcal{H}_{X}$ and not a general field on it.\end{remark} We will eventually see that for this definition to do the trick what we need are two vectors $\Omega^{\pm}$, which we will take to be the volume form and its inverse, such that the following list of properties is satisfied.\begin{enumerate} \item $\Omega^{\pm}$ satisfy the OPEs with the even current given by $J$; $$J(z)\Omega^{\pm}(w)\sim\frac{\pm d\Omega^{\pm}(w)}{z-w}.$$ \item With respect to the odd currents $Q$ and $G$ we have regular OPEs $$Q(z)\Omega^{+}(w)\sim 0$$ $$G(z)\Omega^{-}(w)\sim 0.$$ \item We have an OPE $$\Omega^{+}(z)\Omega^{-}(w)\sim\frac{1}{(z-w)^{d}}+\Big(\operatorname{less}\,\operatorname{singular}\,\operatorname{at} z=w\Big).$$ \item We have normally ordered products $$:J(z)\Omega^{\pm}(z):=\pm\frac{d}{dz}\Omega^{\pm}(z).$$ \end{enumerate}

\begin{remark} In \cite{Oda} Odake considers the above OPEs, and whilst it is not explained in \emph{loc. cit.} precisely how to get the spectral flow equivariance for $\mathcal{H}_{X}$ from these, it seems pretty clear that the author is aware that this can be done.\end{remark}

\subsection{Acknowledgements} We've benefited from conversations with numerous mathematicians and physicists. Thanks to Mart\'{i} Rosell\'{o}, John Duncan, Reimundo Heluani, Gurbir Dhillon and in particular Andrew Linshaw for helpful correspondence and suggestions.

\section{Brief recollections on the set-up} \subsection{Vertex language} We will provide very little background on the objects in question, we just fix some notation. Vertex algebras are always assumed to be super such. If $V$ is a vertex algebra containing a vector $v$ then we write $$v(z)=\sum_{i}v_{i}z^{-1-i}$$ for the \emph{field} generated by $v$. We write $\operatorname{vac}$ for the vacuum vector and $\partial$ for the translation operator. We abbreviate $v_{(-1)}w$ as $vw$ throughout. We write $:\phi(z)\psi(z):$ for the normally ordered product of fields $\phi$ and $\psi$. We encode commutators between the modes of fields as OPEs as is standard, cf. \cite{Kac}

Given a vector $\alpha$ we define the \emph{vertex operators} $E^{\pm}_{\alpha}(z)$ by  $$E^{+}_{\alpha}(z):=\operatorname{exp}\Big(\sum_{n=1}^{\infty}\frac{\alpha_{-n}z^{n}}{n}\Big),$$ $$E^{-}_{\alpha}(z):=\operatorname{exp}\Big(\sum_{n=1}^{\infty}\frac{\alpha_{n}z^{-n}}{-n}\Big).$$ Assuming also that $\alpha_{0}$ acts semisimply on $V$ with integral eigenvalues, we can define the field $z^{\alpha_{0}}$. If $\alpha$ and $\beta$ are two vectors in $V$ satisfying the OPE $$\alpha(z)\beta(w)\sim\frac{N}{(z-w)^{2}}$$ for an integer $N$, then we have, \emph{cf} \cite{Kac}, $$E^{+}_{\alpha}(z)E^{-}_{\beta}(w)=\big(1-z/w\big)^{N}E^{-}_{\beta}(w)E^{+}_{\alpha}(z).$$ 

\subsection{The chiral de Rham as an $\mathcal{N}=2$ module} In \cite{MSV} the authors show how to the tensor product of $d$ copies of  the $bc\beta\gamma$ system, $V_{bc\beta\gamma}$, to a sheaf of vertex algebras on $X$. More precisely they work with a certain completion with respect to the vectors $\gamma^{i}_{0}\operatorname{vac}$, \emph{cf} subsection 3.1. of \emph{loc. cit.} We will denote the resulting completion also as $V_{bc\beta\gamma}$ by a slight abuse of notation. Formally locally on $X$, the result is thus isomorphic as a vertex algebra to  $V_{bc\beta\gamma}^{\otimes d}$, which we recall is generated by bosonic fields $\{\gamma^{i}(z),\beta_{i}(z)\}$ and fermionic fields $\{b_{i}(z), c^{i}(z)\}$, for $1=1,...,d$, subject only to the OPEs $$\gamma^{i}(z)\beta_{j}(w)\sim\frac{\delta_{ij}}{(z-w)}$$ $$b_{i}(z)c^{j}(w)\sim\frac{\delta_{ij}}{(z-w)}.$$ \begin{remark} With respect to these coordinates, the volume form $\Omega^{+}$ is locally expressed as $\Omega^{+}=c^{1}...c^{d}$. Denoting by $\Omega^{-}$ the polyvector field dual to $\Omega^{+}$, $\Omega^{-}$ is expressed as $b_{1}...b_{d}$. \end{remark}

The $\mathcal{N}=2$ superconformal vertex algebra, here denoted $V_{\mathcal{N}=2}$ is generated by fields denoted traditionally as $L,J,Q,G$, subject to various OPEs that the reader can find in \cite{MSV} subsection 2.1. We denote by $\mathfrak{g}_{\mathcal{N}=2}$ the Lie algebra of modes of $V_{\mathcal{N}=2}$. We will make use only of the following OPEs in this note; $$J(z)J(w)\sim\frac{d}{(z-w)^{2}},$$ $$J(z)Q(w)\sim\frac{Q(w)}{z-w},$$ $$J(z)G(w)\sim \frac{-G(w)}{(z-w)}.$$ Then theorem 4.2 of \cite{MSV} constructs a morphism $V_{\mathcal{N}=2}\rightarrow\Omega^{ch}_{X}$, or equivalently global sections $L,J,Q,G$ of $\Omega^{ch}_{X}$ satisfying the OPEs of the identically named vectors in $V_{\mathcal{N}=2}$.
\begin{remark} In \cite{MSV} the authors work with the so-called topological vertex algebra at rank $d$. This is obtained as a \emph{topological twist} of $V_{\mathcal{N}=2}$, and in particular the underlying vertex algebras are the same, they differ only in choice of a Virasoro via the \emph{twist} $$L\mapsto L\pm\frac{1}{2}\partial J.$$ \end{remark} 

For reference, we recall the local form of the global sections $L,J,G,Q$ in $V_{bc\beta\gamma}^{\otimes d}$. We will only use explicitly the local form of the global section $J$, which we recall is given as $\sum_{i}c^{i}b_{i}$.

\begin{definition} The spectral flow automorphism, $\sigma$, of the Lie algebra $\mathfrak{g}_{\mathcal{N}=2}$ at central charge $c=3d$  is defined by the following formulae, which we choose to express in terms of fields; \begin{itemize}\item $L(z)\mapsto L(z)+J(z)+\frac{d}{2}z^{-1}$, \item $J(z)\mapsto J(z)+dz^{-1}$, \item $Q(z)\mapsto zQ(z)$, \item $G(z)\mapsto G(z)/z.$ \end{itemize}\end{definition}

\begin{definition} We denote by $\sigma^{*}\Omega^{ch}_{X}$ the sheaf of modules for $\mathfrak{g}_{\mathcal{N}=2}$ defined by twisting by $\sigma$. Namely, if $x\in\mathfrak{g}_{\mathcal{N}=2}$ and $s$ is a local section of $\Omega^{ch}_{X}$, the twisted action is given by $x.s:=\sigma(x)s$. \end{definition}

\section{Constructing equivariance} We will show in this section how to construct the desired isomorphism $\sigma_{X}$ between $\Omega^{ch}$ and its spectral flow twist. First we write down a certain field on $\Omega^{ch}_{X}$. Then we check that it is actually a constant field, and can be considered as an endomorphism. We then check that it has intertwines the untwisted and twisted $\mathcal{N}=2$ module structures. Finally, we construct an inverse for it.

\subsection{A formula for $\sigma_{X}$} Let $J$ be the global section of $\Omega^{ch}_{X}$ defined by the vector $J$, and recall that we have the OPE $$J(z)J(w)\sim\frac{D}{(z-w)^{2}}.$$ Further recall the fields $E^{\pm}_{J}(z)$ introduced above and the operator $(-1)^{F_{0}}$ which is $1$ on Bosons and $-1$ on Fermions.

\label{maindef}\begin{definition} Let $X$ be a Calabi-Yau with volume form $\Omega^{+}$. Define the field $$\sigma_{X}(z):=(-1)^{F_{0}}E^{+}_{-J}(z)\Omega^{+}(z)E^{-}_{-J}(z)z^{-J_{0}}.$$ \end{definition}

This turns out to be independent of $z$, as we shall see in the lemma below. We will denote by $\sigma_{X}$ the resulting endomorphism of $\Omega^{ch}_{X}$.

\label{constant}\begin{lemma}The field $\sigma_{X}(z)$ is constant, that is to say we have $\frac{d}{dz}(\sigma(z))=0.$\end{lemma}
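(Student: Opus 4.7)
The plan is to verify $\tfrac{d}{dz}\sigma_X(z)=0$ by direct computation: differentiate the four $z$-dependent factors via the Leibniz rule and observe pairwise cancellations. The essential input is property (4) from the introduction, which rewrites $\tfrac{d}{dz}\Omega^+(z)$ as the normally-ordered product $:\!J(z)\Omega^+(z)\!:$.

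First, I would record the derivatives of the exponential factors using the standard formulas
$$\tfrac{d}{dz}E^+_{-J}(z)=-J_-(z)E^+_{-J}(z),\qquad \tfrac{d}{dz}E^-_{-J}(z)=\bigl(-J_+(z)+J_0z^{-1}\bigr)E^-_{-J}(z),$$
where $J_-(z):=\sum_{n\geq 1}J_{-n}z^{n-1}$ and $J_+(z):=\sum_{n\geq 0}J_nz^{-n-1}$ denote the creation and annihilation parts of $J(z)$ respectively; the $J_0z^{-1}$ term in the second formula arises because the exponent of $E^-_{-J}$ sums only over $n\geq 1$. Similarly $\tfrac{d}{dz}z^{-J_0}=-J_0z^{-1}\cdot z^{-J_0}$, and by hypothesis (4),
$$\tfrac{d}{dz}\Omega^+(z)=\,:\!J(z)\Omega^+(z)\!:\,=J_-(z)\Omega^+(z)+\Omega^+(z)J_+(z).$$

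Next comes the cancellation. The two $J_0z^{-1}$ contributions — one from $\tfrac{d}{dz}E^-_{-J}(z)$, the other from $\tfrac{d}{dz}z^{-J_0}$ — cancel against each other outright, because $J_0$ is central in the Heisenberg subalgebra (we have $[J_m,J_n]=md\,\delta_{m+n,0}$) and so commutes through $E^-_{-J}(z)$. For the remaining three contributions, one observes that $J_-(z)$ commutes with $E^+_{-J}(z)$ — both involve only the mutually commuting modes $J_{-m}$, $m\geq 1$ — so the contribution from $\tfrac{d}{dz}E^+_{-J}(z)$ can be rewritten with $-J_-(z)$ moved inside, adjacent to $\Omega^+(z)$. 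Combined with $\tfrac{d}{dz}\Omega^+(z)$ and the $-\Omega^+(z)J_+(z)$ term from $\tfrac{d}{dz}E^-_{-J}(z)$, the bracketed factor becomes
$$-J_-(z)\Omega^+(z)+J_-(z)\Omega^+(z)+\Omega^+(z)J_+(z)-\Omega^+(z)J_+(z)=0,$$
sandwiched between $(-1)^{F_0}E^+_{-J}(z)$ on the left and $E^-_{-J}(z)z^{-J_0}$ on the right.

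The main obstacle is not analytic but combinatorial: one has to split $J(z)=J_-(z)+J_+(z)$ in exactly the way compatible with the normally-ordered product appearing in property (4), and track carefully which modes commute through which vertex exponentials. The odd parity of $\Omega^+$ plays no role here (since $J$ is bosonic), and no Calabi-Yau input beyond property (4) is used; properties (1)--(3) will enter only in the later verification that $\sigma_X$ actually intertwines the two $\mathcal{N}=2$ structures.
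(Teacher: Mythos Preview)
Your proposal is correct and follows the same approach as the paper: both reduce the vanishing of $\tfrac{d}{dz}\sigma_X(z)$ to the identity $:\!J(z)\Omega^+(z)\!:=\tfrac{d}{dz}\Omega^+(z)$ (property (4)), with the Leibniz-rule cancellations you describe being exactly what the paper compresses into the phrase ``this comes down to''. The only difference is emphasis: you spell out the derivative computation explicitly while taking property (4) as given, whereas the paper leaves the Leibniz step to the reader and instead verifies $J\Omega^+=\partial\Omega^+$ directly in local coordinates $J=\sum_i c^ib_i$, $\Omega^+=c^1\cdots c^d$ via Borcherds' formula.
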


\begin{proof} Differentiating the definition of $\sigma(z)$, this comes down to the equation for the normally ordered product $:J(z)\Omega^{+}(z):=\frac{d}{dz}\Omega^{+}(z).$ This is equivalent to $J\Omega^{+}=\partial\Omega^{+}.$, which follows from the expressions in local coordinates; $$J=\sum_{i}c^{i}b_{i},$$ $$\Omega^{+}=c^{1}c^{2}...c^{d}.$$ Indeed, expanding out the $(-1)$ mode of the expression for $J$ using Borcherds' formula we see that only the summands $c^{i}_{(-2)}b_{i,(0)}$ act non-trivially. That they act as desired is clear from the OPEs $$b_{j}(z)c^{i}(w)\sim\frac{\delta_{ij}}{z-w},$$ and we are done.\end{proof}

\subsection{Constructing the inverse} Recall we let $\Omega^{-}$ be the polyvector on $X$ inverse to the volume form $\Omega^{+}$. We define now the field $$\tau_{X}(w):=(-1)^{F_{0}}E^{+}_{J}(w)\Omega^{-}(w)E^{-}_{J}(w)w^{J_{0}}.$$ As in the case of \ref{constant} we have the following lemma which allows us to treat the field $\tau_{X}(w)$ as a constant $\tau_{X}$.

\begin{lemma} We have $\frac{d}{dw}\tau_{X}(w)=0$. \end{lemma}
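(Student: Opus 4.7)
The plan is to run the proof of Lemma~\ref{constant} \emph{mutatis mutandis}, with the roles of $-J$ and $\Omega^+$ taken by $+J$ and $\Omega^-$. Differentiating $\tau_X(w)$ by the product rule produces four contributions, from $\partial E^+_J(w)$, $\partial\Omega^-(w)$, $\partial E^-_J(w)$, and $\partial w^{J_0}$. These can be rearranged into a common ordering using the OPEs $J(z)J(w)\sim d/(z-w)^2$ and $J(z)\Omega^-(w)\sim -d\Omega^-(w)/(z-w)$, and the same cancellation as in \emph{loc.\ cit.} reduces the vanishing of $\frac{d}{dw}\tau_X(w)$ to the normally ordered product identity
$$:J(w)\Omega^-(w): = -\partial\Omega^-(w),$$
which is the $-$ case of property~(4) from the introduction, or equivalently $J\Omega^- = -\partial\Omega^-$ at the level of vectors.

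This identity is verified in local coordinates, where $J = \sum_i c^i b_i$ and $\Omega^- = b_1\cdots b_d$. Expanding $(c^i b_i)_{(-1)}\Omega^-$ via Borcherds' formula, the only summand that now acts non-trivially is $-b_{i,(-2)}c^i_{(0)}$ rather than the $+c^i_{(-2)}b_{i,(0)}$ that survived in the $\Omega^+$ case. Indeed, every mode $b_{i,(k)}$ with $k\geq 0$ annihilates $\Omega^-$ because the $b$'s mutually anticommute and the positive modes kill the vacuum, while $b_{i,(-1)}\Omega^-$ vanishes since $b_i$ already appears as a factor of $\Omega^-$. Using the OPE $c^i(z)b_j(w)\sim\delta_{ij}/(z-w)$, one computes $c^i_{(0)}\Omega^- = (-1)^{i-1} b_1\cdots\widehat{b_i}\cdots b_d$, and then $b_{i,(-2)} = (\partial b_i)_{(-1)}$ reinserts $\partial b_i$ at the front. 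Summing over $i$ and invoking the fermionic Leibniz rule yields exactly $-\partial\Omega^-$.

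The main obstacle is sign bookkeeping. The crucial minus sign on the right-hand side of $:J(w)\Omega^-(w):=-\partial\Omega^-(w)$ arises from which Borcherds summand survives, and it is precisely this sign that matches all the simultaneous sign reversals built into the definition of $\tau_X$ versus $\sigma_X$ (namely $E^\pm_J$ in place of $E^\pm_{-J}$ and $w^{J_0}$ in place of $z^{-J_0}$), together with the fact that $\Omega^-$ carries $J_0$-charge $-d$ rather than $+d$. All these reversals occur in concert, so the cancellation goes through exactly as in Lemma~\ref{constant}.
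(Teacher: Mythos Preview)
Your proposal is correct and follows exactly the paper's approach: the paper's proof is the single line ``Exactly as in \ref{constant}, except we now have $J\Omega^{-}=-\partial\Omega^{-}$,'' and you have simply unpacked this, supplying the local-coordinate verification and the sign bookkeeping that the paper leaves implicit. Your identification of $-b_{i,(-2)}c^i_{(0)}$ as the surviving Borcherds summand (with the super sign) is accurate and is precisely what produces the needed minus sign.
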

\begin{proof} Exactly as in \ref{constant}, except we now have $J\Omega^{-}=-\partial\Omega^{-}$. \end{proof}

Now we prove that $\tau_{X}$ is the inverse to $\sigma_{X}$.

\begin{proof} By definition we must show that we have an equality of fields $$E^{+}_{-J}(z)\Omega^{+}(z)E^{-}_{-J}(z)z^{-J_{0}}E^{+}_{J}(w)\Omega^{-}(w)E^{-}_{J}(w)w^{J_{0}}=\operatorname{id}.$$ Our strategy is to rewrite the right hand side in a manner that makes the $z=w$ specialisation, which we note must exist as the field in question is actually constant, more readily computable. In order to do this we compare the above expression to the field $$E^{+}_{-J}(z)E^{+}_{J}(w)\Omega^{+}(z)\Omega^{-}(w)E^{-}_{-J}(z)E^{-}_{J}(w)z^{-J_{0}}w^{J_{0}}.$$ We must now account for the various left to right switches in the order of the individual terms in the product. \begin{enumerate}\item Swapping $z^{-J_{0}}$ and $\Omega^{-}(w)$ introduces a factor $z^{d}$. \item Swapping $E^{-}_{-J}(z)$ and $E^{+}_{J}(w)$ introduces a factor $\big(1-w/z\big)^{-d}$.\item Swapping $E^{+}_{J}(w)$ and $\Omega^{+}(z)$ introduces a factor  $\big(1-w/z\big)^{d}$. \item Swapping $E^{-}_{-J}(z)$ and $\Omega^{-}(w)$ introduces a factor  $\big(1-w/z\big)^{d}$.\end{enumerate} The first point simply states that $\Omega^{-}$ is of weight $-d$ with respect to $J_{0}$. The second encodes the OPE $$J(z)J(w)\sim\frac{d}{(z-w)^{2}}.$$ The third and fourth follow respectively from the OPEs $$J(z)\Omega^{+}(w)\sim \frac{d\Omega^{+}(w)}{z-w},$$ $$J(z)\Omega^{-}(w)\sim \frac{-d\Omega^{-}(w)}{z-w},$$ which in term follow from the fact that $J_{>0}$ annihilate $\Omega^{+}$ and $\Omega^{-}$, as they are minimal conformal weight states with $J_{0}$ eigenvalues $d$ and $-d$ respectively. Combining the above, the contributions from the second and third terms cancel and we obtain a total factor of $$z^{d}\Big(1-w/z\Big)^{d}=(z-w)^{d}.$$ We find thus $$\sigma_{X}(z)\tau_{X}(w)=(z-w)^{d}E^{+}_{-J}(z)E^{+}_{J}(w)\Omega^{+}(z)\Omega^{-}(w)E^{-}_{-J}(z)E^{-}_{J}(w)z^{-J_{0}}w^{J_{0}}.$$ Now the expressions in local coordinates, $\Omega^{+}=c^{1}...c^{d}$ and $\Omega^{-}=b_{1}...b_{d}$, imply an OPE $$\Omega^{+}(z)\Omega^{-}(w)\sim\frac{1}{(z-w)^{d}}+\Big(\operatorname{less}\,\operatorname{singular}\,\operatorname{at} z=w\Big).$$ The coefficient of $(z-w)^{d}$ then ensures that the product is regular at $z=w$. Now we simply set $z=w$. The exponential terms will not contribute as $E^{\pm}_{-J}(z)E^{\pm}_{J}(w)|_{z=w}=1$. Then the only contribution comes from the most singular term of the OPE for $\Omega^{+}$ and $\Omega^{-}$, which contributes the identity as desired. This concludes the proof.\end{proof}

\subsection{Intertwining the twisted and untwisted actions} We now examine how $\sigma_{X}$ interacts with the action of the $\mathcal{N}=2$.

\begin{lemma} $\sigma_{X}$ is a morphism of $\mathcal{N}=2$ modules from $\Omega^{ch}_{X}\longrightarrow\, \sigma^{*}\Omega^{ch}_{X}.$ \end{lemma}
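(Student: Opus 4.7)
The plan is to verify the intertwining identity $\sigma_X\cdot y(w) = \sigma(y)(w)\cdot\sigma_X$, regarded as a field identity in $w$ with $\sigma_X$ understood as a constant, separately for each of the generators $y\in\{J,Q,G,L\}$ of $V_{\mathcal{N}=2}$. Equivalently, using the inverse $\tau_X$ already constructed, one computes the conjugation $\sigma_X\cdot y(w)\cdot\tau_X$ and checks that it agrees with $\sigma(y)(w)$. The method is to push $y(w)$ past each of the four factors in
$$\sigma_X(z) = (-1)^{F_0} E^+_{-J}(z)\,\Omega^+(z)\,E^-_{-J}(z)\,z^{-J_0},$$
using the OPEs of $y$ with $J$ and with $\Omega^\pm$ listed in the introduction, and collecting the scalar prefactors that result.

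For $y = J$ this uses only the OPE $J(z)J(w)\sim d/(z-w)^2$ (giving the commutators $[J_m, E^\pm_{-J}(z)]$) together with $J(z)\Omega^+(w)\sim d\Omega^+(w)/(z-w)$ (giving $[J_m, \Omega^+(z)] = dz^m\Omega^+(z)$). The $E^\pm$ contributions cancel the $\Omega^+$ contribution for every $m\neq 0$; for $m=0$ a residual term from $\Omega^+$ survives and produces exactly the shift $J\mapsto J+dz^{-1}$. For $y = Q$ the key inputs are the regularity $Q(z)\Omega^+(w)\sim 0$ (so that $Q(w)$ super-commutes past $\Omega^+(z)$ with no new singular terms) together with $J(z)Q(w)\sim Q(w)/(z-w)$, which yields standard lattice-type conjugation identities for the $E^\pm_{-J}$ factors; assembling these with $z^{-J_0}$ and the fermion sign reproduces the spectral-flow shift $Q\mapsto zQ$.

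The case $y = G$ is the subtle one, because $G\cdot\Omega^+$ is not OPE-regular, so the direct pass-through method of the $Q$-case breaks down. The assumed regularity is supplied instead in the dual direction, $G(z)\Omega^-(w)\sim 0$, which suggests running the parallel computation through the inverse $\tau_X$ (built from $\Omega^-$) and then reading off the $G$-intertwining for $\sigma_X$ via $\sigma_X\tau_X=\operatorname{id}$. Finally, $y = L$ is handled by observing that $L$ lies in the super-Lie algebra generated by the modes of $J$, $Q$, $G$ via the $\mathcal{N}=2$ anticommutator $\{Q_r, G_s\} = 2L_{r+s} + (r-s)J_{r+s} + \text{scalar}$, so the $L$-intertwining is forced by the $J$, $Q$, $G$ cases already established; an equivalent direct verification uses the relation $:J(z)\Omega^+(z): = \partial\Omega^+(z)$ already employed to prove constancy of $\sigma_X(z)$. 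The main obstacle throughout is the $G$-case: one must circumvent the absence of $G$-$\Omega^+$ regularity by routing through $\tau_X$ and the dual regularity $G\cdot\Omega^-\sim 0$.
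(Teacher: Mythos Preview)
Your proposal is correct and follows essentially the same route as the paper: push the generator fields past each factor of $\sigma_X(z)$ using the $J$-OPEs and the regularity $Q(z)\Omega^+(w)\sim 0$, and handle $G$ by the symmetric computation through $\tau_X$ using $G(z)\Omega^-(w)\sim 0$. The only cosmetic difference is that the paper observes at the outset that $Q$ and $G$ already generate the $\mathcal{N}=2$ (so both $J$ and $L$ come for free from the anticommutator $\{Q,G\}$), making your separate treatment of $J$ redundant though of course not wrong.
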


\begin{proof} It is standard, and easy to check, that $Q(w)$ and $G(w)$ generate the $\mathcal{N}=2$, so we need only check compatibility of $\sigma_{X}$ with these. Then it suffices to check that we have the following two relations;

$$\sigma_{X}Q(w)=wQ(w)\sigma_{X},$$ $$\tau_{X}G(w)=wG(w)\tau_{X},$$ where we note that there is an evident symmetry between the two formulae. We will show the first, leaving the near identical proof of the second for the reader.

\begin{itemize}\item We first note the following trivial relations, we have $$z^{J_{0}}Q(w)z^{-J_{0}}=zQ(w),\,(-1)^{F_{0}}Q(w)(-1)^{F_{0}}=-Q(w),$$ as $Q$ is Fermionic and has weight one with respect to $J_{0}$. \item Now we recall the very general identity $e^{x}ye^{-x}=e^{\operatorname{Adj}(x)}(y)$, valid in any topological Lie algebra containing elements $x$ and $y$ such that $\operatorname{Adj}(x)^{n}(y)$ converges as $n\rightarrow\infty$. Using this we compute $$E^{+}_{J}(z)Q(w)E^{+}_{-J}(z)=\operatorname{exp}\Big(\operatorname{Adj}\Big(\sum_{n=1}^{\infty}\frac{J_{-n}z^{n}}{n}\Big)\Big)Q(w).$$ \item Recalling the OPE $J(z)Q(w)\sim \frac{Q(w)}{z-w},$ we compute that $$E^{+}_{J}(z)Q(w)E^{+}_{-J}(z)=\big(1-z/w\,\big)^{-1}Q(w).$$ An identical computation implies we have $$E^{+}_{J}(z)Q(w)E^{+}_{-J}(z)=\big(1-w/z\,\big)Q(w).$$ \item $Q(w)$ commutes with the field $\Omega^{+}(z)$.\end{itemize} Putting this all together and recalling that we have $$\sigma_{X}(z)=(-1)^{F_{0}}E^{+}_{-J}(z)\Omega^{+}(z)E^{-}_{-J}(z)z^{-J_{0}},$$ we find a total factor of $$(-1)z\big(1-z/w\,\big)^{-1}\big(1-w/z\,\big)=w,$$ upon moving $Q(w)$ from left to right. This proves the claim.\end{proof}

Putting the above together, we arrive at our main theorem, which we restate below.

\begin{theorem}There is a natural isomorphism, $\sigma_{X}:\Omega^{ch}_{X}\rightarrow\sigma^{*}\Omega^{ch}_{X},$  of sheaves of modules for the Lie algebra $\mathfrak{g}_{\mathcal{N}=2}$. Taking cohomology there is a natural isomorphism of $\mathfrak{g}_{\mathcal{N}=2}$ modules, $\mathcal{H}_{X}\rightarrow\sigma^{*}\mathcal{H}_{X}$. \end{theorem}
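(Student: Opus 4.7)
The plan is to assemble the lemmas of the preceding subsections into a clean proof of the restated theorem; the real work has already been done, and what remains is bookkeeping. I would begin by taking $\sigma_X$ to be the constant value of the field from Definition \ref{maindef}, which is legitimate by Lemma \ref{constant}. The key observation is that the defining formula involves only the globally defined current $J$ (given by the Malikov--Schechtman--Vaintrob $\mathcal{N}=2$ structure on $\Omega^{ch}_X$) and the global volume form $\Omega^+$, so the result is an honest endomorphism of the \emph{sheaf} $\Omega^{ch}_X$ and not merely of its local model $V_{bc\beta\gamma}^{\otimes d}$.

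Next I would establish invertibility by citing the lemma proving $\sigma_X\tau_X=\mathrm{id}$, together with the symmetric argument, obtained by interchanging $(\Omega^+, J)\leftrightarrow(\Omega^-, -J)$, to obtain $\tau_X\sigma_X=\mathrm{id}$. For the compatibility with the $\mathcal{N}=2$ action, I would invoke the final lemma of the section, which provides the two intertwining identities $\sigma_X Q(w)=wQ(w)\sigma_X$ and $\tau_X G(w)=wG(w)\tau_X$. Conjugating, these read $\sigma_X Q(w)\sigma_X^{-1}=wQ(w)$ and $\sigma_X G(w)\sigma_X^{-1}=G(w)/w$, which are exactly the spectral flow images of $Q$ and $G$. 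Since $\mathfrak{g}_{\mathcal{N}=2}$ is generated as a Lie algebra of modes by the coefficients of the two odd currents (their anticommutators recover both $L$ and $J$ up to scalars), this suffices to force $\sigma_X$ to intertwine the full module structure on $\Omega^{ch}_X$ with its twist by $\sigma$.

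For the cohomological statement I would simply apply $H^*(X,-)$ to the sheaf isomorphism. Since $\sigma^*$ is a purely algebraic relabelling of the $\mathfrak{g}_{\mathcal{N}=2}$-action that leaves the underlying sheaf unchanged, it commutes with sheaf cohomology, so the map descends to an isomorphism $\mathcal{H}_X\to\sigma^*\mathcal{H}_X$ of $\mathfrak{g}_{\mathcal{N}=2}$-modules. The one point I would be careful about is the generation claim for $\mathfrak{g}_{\mathcal{N}=2}$, namely that a check on $Q$ and $G$ really propagates to all of $\mathfrak{g}_{\mathcal{N}=2}$; this is automatic because conjugation by a fixed operator respects all Lie brackets, but it is the only substantive piece of structural bookkeeping in what is otherwise a mechanical assembly of the preceding results.
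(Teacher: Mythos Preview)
Your proposal is correct and follows exactly the paper's approach: the paper's proof of the restated theorem is literally the sentence ``Putting the above together, we arrive at our main theorem,'' so the content is precisely the assembly of the preceding lemmas that you describe. Your write-up is in fact more explicit than the paper's, in that you spell out the symmetric argument for $\tau_X\sigma_X=\mathrm{id}$ and the passage from the $G$-identity for $\tau_X$ to the corresponding one for $\sigma_X$, both of which the paper leaves implicit.
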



\begin{thebibliography}{9}

\bibitem{BHS}

D. Ben-Zvi, R. Heluani, M. Szczesny.
\textit{ Supersymmetry of the chiral de Rham complex}. 
Compositio Mathematica. 2008;144(2):503-521.

\bibitem{BorLib}
L. Borisov, A. Libgober,
\textit{Elliptic genera of toric varieties and applications to mirror symmetry,}
Invent. Math. 140 pp. 453-485. 

\bibitem{Eg1}
T. Eguchi,
\textit{Compact Formulas for the Completed Mock Modular Forms,}
JHEP 2014

\bibitem{Eg2}
T. Eguchi,
\textit{Modular bootstrap of boundary N=2 Liouville theory,}
Comptes Rendus Physique 6 (2005) 209-217

\bibitem{Kac}
V. Kac,
\textit{Vertex Algebras for Beginners,}
University Lecture Series, AMS (1998)

\bibitem{Kap}
A. Kapustin,
\textit{Chiral de Rham complex and half twisted sigma model,} 
arXiv:hep-th/0504074

\bibitem{KapVass}
M. Kapranov, E. Vasserot,
\textit{Vertex algebras and the formal loop space,}
Pub. Math. de l'IHES, vol. 100 (2004), pp. 209-269.

\bibitem{LinSong}
A. Linshaw, B. Song,
\textit{The Global Sections of Chiral de Rham Complexes on Compact Ricci-flat Kahler Manifolds II,}
Commun. Math. Phys. 399, 189–202 (2023).

\bibitem{MSV}
F. Malikov, V. Schechtman, A. Vaintrob,
\textit{Chiral de Rham Complex,}
Commun. Math. Phys. 204, 439-473

\bibitem{Oda}
S. Odake,
\textit{c=3d conformal algebra with extended supersymmetry,}
Modern Physics Letters A 1990 05:08, 561-580
















\end{thebibliography}
\end{document}